\documentclass{amsart}

\usepackage[T1]{fontenc}
\usepackage[utf8]{inputenc}
\usepackage{amsmath,amssymb,amsthm,mathtools}

\usepackage{enumitem}
\usepackage{microtype}

\usepackage[
  colorlinks=true,
  linkcolor=blue,
  citecolor=blue,
  urlcolor=blue
]{hyperref}
\usepackage{doi}

\newtheorem{theorem}{Theorem}[section]
\newtheorem{lemma}[theorem]{Lemma}
\newtheorem{proposition}[theorem]{Proposition}
\newtheorem{corollary}[theorem]{Corollary}

\theoremstyle{definition}

\newtheorem{example}[theorem]{Example}

\theoremstyle{remark}

\DeclareMathOperator{\Aut}{Aut}
\DeclareMathOperator{\ord}{ord}
\DeclareMathOperator{\Tor}{Tor}

\newcommand{\id}{\operatorname{id}}
\newcommand{\Inv}{\iota}
\newcommand{\Nset}{\mathcal N(G)}
\newcommand{\Pset}{\mathcal P(G)}
\newcommand{\Z}{\mathbb Z}

\title[Inverse Ambiguous Maps on Infinite Groups]
{Inverse Ambiguous Maps on Infinite Groups}

\author{Sezen Bostan}
\address{%
T\"UB\.{I}TAK ULAKB\.{I}M\\
Middle East Technical University\\
Modsimmer Building\\
2nd Floor, Room 203\\
06800 \c{C}ankaya, Ankara\\
Turkey
}
\email{sezen.bostan@tubitak.gov.tr}

\author{K{\i}van\c{c} Ersoy}
\address{%
Freie Universit\"at Berlin\\
Fachbereich Mathematik und Informatik\\
Arnimallee 7\\
14195 Berlin\\
Germany
}
\email{ersoy@zedat.fu-berlin.de}

\dedicatory{%
Dedicated to Professor Mahmut Kuzucuo\u{g}lu
on the occasion of his retirement.
}

\subjclass{%
Primary 20E36;
Secondary 20K30, 20E26, 20F50, 39B12}

\keywords{%
inverse ambiguous function,
inverse ambiguous automorphism,
functional equation,
finitely generated abelian group,
divisible abelian group,
locally finite group,
residually finite group}

\begin{document}
\begin{abstract}
Let $G$ be a group. A bijection $f\colon G\to G$ is called inverse
ambiguous if $f^{-1}(x)=f(x)^{-1}$ for every $x\in G$. We prove that
every infinite group admits an inverse ambiguous function. An inverse
ambiguous automorphism can exist only on an abelian group. We classify
the finitely generated and divisible abelian groups admitting such
automorphisms. If $A\cong\Z^r\oplus T$, where $T$ is finite, then $A$
admits an inverse ambiguous automorphism if and only if $r$ is even and
$T$ admits one. Together with Toborg's finite classification, this
gives an explicit classification of the finitely generated abelian
groups admitting such automorphisms. For a divisible abelian group
$A\cong\mathbb Q^{(\kappa_0)}\oplus
\bigoplus_p C_{p^\infty}^{(\kappa_p)}$, such an automorphism exists if
and only if none of $\kappa_0$, $\kappa_2$, or $\kappa_p$ for
$p\equiv3\pmod4$ is a finite odd cardinal. We also prove that if
every proper subgroup of an infinite locally finite group $G$ admits an
inverse ambiguous
automorphism, then $G$ admits one that leaves every subgroup invariant.
Finally, we prove that a residually finite group whose finite
quotients admit inverse ambiguous automorphisms is abelian. However, we  prove the existence of infinite
residually finite abelian groups whose finite quotients all admit such
automorphisms, but the group itself does not.
\end{abstract}
\maketitle

\section{Introduction}

Let $G$ be a group and let $f\colon G\to G$ be a bijection. The
expressions $f(x)^{-1}$ and $f^{-1}(x)$ have different meanings: the
first is the inverse of $f(x)$ in $G$, while the second is the
preimage of $x$ under $f$. The map $f$ is called \emph{inverse
ambiguous} if
\[
f^{-1}(x)=f(x)^{-1}
\qquad (x\in G).
\]
If
\[
\iota\colon G\to G,
\qquad
\iota(x)=x^{-1},
\]
denotes the inversion permutation, then this condition is equivalent
to
\[
f^2=\iota.
\]
Thus inverse ambiguous functions are exactly the square roots of the
inversion permutation.

Related functional equations were first studied in analysis. Euler
and Foran~\cite{EulerForan} considered functions whose inverse is
their reciprocal. Cheng, Dasgupta, Ebanks, Kinch, Larson, and
McFadden~\cite{ChengEtAl} studied the equation $f^{-1}=1/f$, and
Griffiths~\cite{Griffiths} considered equations involving second
iterates. Schmitz introduced inverse ambiguous functions in the
group-theoretic setting~\cite{Schmitz}. He proved that a finite group
admits such a function if and only if the number of its elements of
order at least three is divisible by four. Schmitz and
Gallagher~\cite{SchmitzGallagher} applied this criterion to several
families of finite non-abelian groups. Toborg obtained further
criteria for finite groups and classified the finite groups admitting
inverse ambiguous automorphisms~\cite{Toborg}. Continuous inverse
ambiguous functions on Lie groups were studied by Schmitz, Rahman,
and Kindness~\cite{schmitzrahmankindness2025continuous}.

In this paper, we study these questions for infinite groups in the
algebraic setting. We first prove that every
infinite group admits an inverse ambiguous function. Together with
the finite criterion, this shows that a group admits such a function
if and only if it is infinite or is finite and the number of its
elements of order at least three is divisible by four.

The automorphism case is more rigid. Toborg's
argument in the finite case applies without change to arbitrary
groups and shows that every group admitting an inverse ambiguous
automorphism is abelian. For an abelian group $A$, written
additively, the condition is equivalent to the existence of an
automorphism $J$ such that
\[
J^2=-\id_A.
\]
Equivalently, the underlying abelian group admits a
$\mathbb Z[i]$-module structure. We use this observation to classify
finitely generated and divisible abelian groups admitting inverse
ambiguous automorphisms. If
\[
A\cong\mathbb Z^r\oplus T,
\]
where $T$ is finite, then $A$ admits such an automorphism if and only
if $r$ is even and $T$ admits one. Combining this with Toborg's finite
classification gives an explicit description in terms of the primary
components of $T$.

If $A$ is divisible, write
\[
A\cong\mathbb Q^{(\kappa_0)}\oplus
\bigoplus_p C_{p^\infty}^{(\kappa_p)},
\]
where $C_{p^\infty}$ denotes the Pr\"ufer $p$-group and
$\kappa_0,\kappa_p$ are cardinals. We prove that $A$ admits an inverse
ambiguous automorphism if and only if none of $\kappa_0$, $\kappa_2$,
or $\kappa_p$ for $p\equiv3\pmod4$ is a finite odd cardinal. No
restriction is imposed on $\kappa_p$ for $p\equiv1\pmod4$.

We also study groups whose proper subgroups admit inverse ambiguous
automorphisms. We prove that if $G$ is an infinite locally finite
group and every proper subgroup of $G$ admits such an automorphism,
then $G$ itself admits an inverse ambiguous automorphism that
stabilizes every subgroup of $G$ setwise. A finite semidirect product
shows that infiniteness is necessary, while a Tarski monster of
suitable prime exponent~\cite{OlshanskiiTarski} shows that local
finiteness cannot be omitted.

Finally, we consider finite quotients of residually finite groups. If
every finite quotient of a residually finite group $G$ admits an
inverse ambiguous automorphism, then $G$ is abelian. The existence of
such an automorphism does not always pass from the finite quotients to
the group. We construct an infinite residually finite abelian group
whose finite quotients all admit inverse ambiguous automorphisms, but
the group itself admits none.

Section~2 gives the square-root formulation and recalls the finite
criterion. Section~3 is about infinite groups and inverse ambiguous
automorphisms. Section~4 deals with finitely generated and divisible
abelian groups. Section~5 contains the local-to-global theorem and the
results on finite quotients.

\section{Preliminaries}

Throughout the paper, groups are written multiplicatively unless an
abelian group is explicitly written additively.

Let
\[
\Pset=\bigl\{\{x,x^{-1}\}:x\in G,\ x^2\neq1\bigr\}
\]
denote the set of two-element orbits of inversion.

\begin{proposition}\label{prop:pairing}
A group $G$ admits an inverse ambiguous function if and only if
$\Pset$ can be partitioned into two-element subsets.
\end{proposition}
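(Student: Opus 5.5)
We argue with the square-root formulation $f^2=\iota$ and analyse the cycle structure of $f$. Since $f$ commutes with $f^2=\iota$, $f$ permutes the orbits of $\iota$. These orbits are the fixed points $x$ with $x^2=1$ and the two-element orbits in $\Pset$. Each set is preserved: if $\iota(x)=x$ then $\iota(f(x))=f(\iota(x))=f(x)$, and likewise for $f^{-1}$. So $f$ maps the set of involutions-or-identity $\{x:x^2=1\}$ to itself, and induces a permutation $\bar f$ of $\Pset$.

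\emph{Necessity.} Suppose $f^2=\iota$. On $\{x:x^2=1\}$ we get $f^2=\id$. On a pair $P=\{x,x^{-1}\}\in\Pset$, if $f(P)=P$ then either $f$ fixes $x$, so $f^2(x)=x\ne x^{-1}$, or $f$ swaps $x$ and $x^{-1}$, so $f^2(x)=x$ again. Both contradict $f^2=\iota$, so $\bar f(P)\neq P$. Also $\bar f^2=\bar\iota=\id$ on $\Pset$. Hence $\bar f$ is a fixed-point-free involution of $\Pset$, and its orbits give the required partition of $\Pset$ into two-element subsets.

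\emph{Sufficiency.} Suppose $\Pset$ is partitioned into pairs $\{P,Q\}$. For each such pair, use the axiom of choice to pick representatives, $P=\{x,x^{-1}\}$ and $Q=\{y,y^{-1}\}$. Define $f$ on $P\cup Q$ as the 4-cycle
\[
x\mapsto y\mapsto x^{-1}\mapsto y^{-1}\mapsto x .
\]
On the elements with $x^2=1$, set $f=\id$. The verification is then direct. On each 4-cycle, $f^2$ sends $x\mapsto x^{-1}$, $y\mapsto y^{-1}$, $x^{-1}\mapsto x$ and $y^{-1}\mapsto y$, which is exactly $\iota$. On the remaining elements, $f^2=\id=\iota$. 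The sets $P\cup Q$ and $\{x:x^2=1\}$ partition $G$, so $f$ is a well-defined bijection with $f^2=\iota$. This is equivalent to $f^{-1}(x)=f(x)^{-1}$.

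There is no real obstacle here. The one point needing care is the necessity direction, where we must rule out $f$ stabilizing a pair $P$ by the two-case check above. The use of choice for infinite $\Pset$ should be mentioned explicitly.
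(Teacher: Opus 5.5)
Your proof is correct and follows the paper's argument essentially step for step. For necessity you use the induced fixed-point-free involution on $\Pset$, and for sufficiency the $4$-cycles $x\mapsto y\mapsto x^{-1}\mapsto y^{-1}\mapsto x$ with the identity on $\{x:x^2=1\}$. Your explicit appeal to the axiom of choice when selecting representatives is a point the paper leaves implicit.
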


\begin{proof}
Suppose that $f^2=\Inv$. Since $f$ commutes with $\Inv$, it induces a
permutation $\widehat f$ of $\Pset$. Since inversion fixes each member
of $\Pset$ setwise, $\widehat f^{\,2}=\id$. Moreover, $\widehat f$ has
no fixed point: if $\{x,x^{-1}\}$ were fixed, then $f$ would restrict
to a permutation of a two-element set whose square is the non-trivial
transposition, which is impossible. Hence all orbits of $\widehat f$ have size two.

Conversely, pair the members of $\Pset$. For each pair
\[
\bigl\{\{a,a^{-1}\},\{b,b^{-1}\}\bigr\},
\]
define
\[
a\mapsto b\mapsto a^{-1}\mapsto b^{-1}\mapsto a,
\]
and fix every $x$ satisfying $x^2=1$. The resulting permutation $f$
satisfies $f^2=\Inv$, and is therefore inverse ambiguous.
\end{proof}

The finite case is known.

\begin{theorem}[Schmitz]
\label{thm:finite}
Let $G$ be a finite group. The following are equivalent.
\begin{enumerate}[label=\textup{(\roman*)}]
\item $G$ admits an inverse ambiguous function;
\item the number of elements of order at least three is divisible by
four;
\item the set $\Pset$ has even cardinality.
\end{enumerate}
\end{theorem}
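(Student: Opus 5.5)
The plan is to reduce everything to Proposition~\ref{prop:pairing} together with an elementary count. By Proposition~\ref{prop:pairing}, (i) holds if and only if $\Pset$ can be partitioned into two-element subsets. Since $G$ is finite, $\Pset$ is a finite set. A finite set admits a partition into two-element subsets exactly when its cardinality is even: an even set can be paired off arbitrarily, and a partition into pairs forces even size. This gives (i)$\Leftrightarrow$(iii) at once.

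For (ii)$\Leftrightarrow$(iii), count the elements of order at least three. These are exactly the $x\in G$ with $x^2\neq 1$. Indeed, $x^2=1$ means $x$ has order $1$ or $2$. Each such $x$ lies in the set $\{x,x^{-1}\}\in\Pset$, and this set has exactly two elements because $x\neq x^{-1}$. Distinct members of $\Pset$ are disjoint, since they are the orbits of the involution $\Inv$ restricted to $\{x: x^2\neq1\}$. Hence the number $N$ of elements of order at least three satisfies $N=2\,|\Pset|$. Therefore $4\mid N$ if and only if $|\Pset|$ is even.

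There is no real obstacle. The only point needing care is the disjointness and size-two claim for the members of $\Pset$, which is immediate from the fact that $\Inv$ is an involution with no fixed points on $\{x:x^2\neq1\}$. Everything else follows directly from Proposition~\ref{prop:pairing}.
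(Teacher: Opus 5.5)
Your proof is correct, but it is organized differently from the paper's. The paper does not prove (i)$\Leftrightarrow$(ii) itself. It attributes that equivalence to Schmitz~\cite{Schmitz} and supplies only the count $N=2|\Pset|$ for (ii)$\Leftrightarrow$(iii), which is exactly your second paragraph.

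You instead obtain (i)$\Leftrightarrow$(iii) directly from Proposition~\ref{prop:pairing}. Finiteness is used only to replace ``$\Pset$ can be partitioned into two-element subsets'' by ``$|\Pset|$ is even.'' That proposition is proved in the paper without using the finite criterion, so there is no circularity.

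Your route makes the argument self-contained and shows a uniform mechanism. The finite criterion and Theorem~\ref{thm:infinite} become two instances of the same pairing argument: parity of $|\Pset|$ decides the finite case, and Lemma~\ref{lem:noninv} (that $\Pset$ is empty or infinite) decides the infinite one. Hence Corollary~\ref{cor:all-groups} follows entirely from results proved in the paper.

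The paper's route is shorter, credits the original source, and points to Toborg for further equivalent conditions. The cost is that (i)$\Leftrightarrow$(ii) rests on an external reference, even though Proposition~\ref{prop:pairing} already contains it.
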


\begin{proof}
The equivalence of (i) and (ii) is due to Schmitz~\cite{Schmitz}. The
number of elements of order at least three is $2|\Pset|$, so (ii) and
(iii) are equivalent. For further equivalent conditions in terms of
involutions and Sylow $2$-subgroups, see
Toborg~\cite[Lemma~2.2 and Theorem~2.3]{Toborg}.
\end{proof}

\section{Infinite Groups}
\subsection{Existence}
Let
\[
\Nset=\{x\in G:x^2\neq1\}
\]
be the set of elements of $G$ with non-trivial square.

\begin{lemma}\label{lem:noninv}
Let $G$ be an infinite group. Then $\Nset$ is either empty or infinite.
\end{lemma}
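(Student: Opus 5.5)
Suppose $\Nset$ is nonempty but finite, and fix $a\in\Nset$, so $a^2\neq1$. The plan is to produce infinitely many distinct elements of $\Nset$ by translating $a$ by elements of the infinite set $G\setminus\Nset=\{x\in G:x^2=1\}$. Since $\Nset$ is finite and $G$ is infinite, the set $I=\{x\in G:x^2=1\}$ of elements of order at most two is infinite.

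The key step is to look at the products $xa$ with $x\in I$. These are pairwise distinct, since $x\mapsto xa$ is injective. If infinitely many of them lie in $\Nset$, we contradict finiteness at once. So it suffices to rule out the case where $xa\in I$ for all but finitely many $x\in I$. Suppose $x\in I$ and $xa\in I$. Then $xaxa=1$, so $xax^{-1}=xax=a^{-1}$; that is, $x$ inverts $a$ by conjugation. Now take two such elements $x,y\in I$, both with $xa,ya\in I$. Then $xy$ centralizes $a$. If moreover $xy\in I$, then $(xy)a$ is a product of commuting elements with $(xy)^2=1$, so $((xy)a)^2=a^2\neq1$, which puts $(xy)a$ into $\Nset$.

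To finish, I would fix one $x_0\in I$ with $x_0a\in I$ (there are infinitely many such elements). Every other such $y$ gives $x_0y\in C_G(a)$, and the elements $x_0y$ are distinct for distinct $y$. So $C_G(a)$ contains infinitely many elements $c$. For each such $c$, if $c\in I$ then $ca\in\Nset$ by the computation above. If instead $c\notin I$, then $c\in\Nset$ directly. Either way each $c$ yields a member of $\Nset$: either $ca$ or $c$ itself. The maps $c\mapsto ca$ and $c\mapsto c$ are both injective, so infinitely many $c$ produce infinitely many distinct members of $\Nset$, a contradiction.

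The main obstacle is this final bookkeeping step: making sure that infinitely many elements of the centralizer really do give infinitely many \emph{distinct} elements of $\Nset$. This is handled by the observation that each of the two assignments is injective, and that at most two values of $c$ can share a single image across the two cases.
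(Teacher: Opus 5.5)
Your proof is correct and follows the paper's argument essentially step by step: elements $x\in I$ with $xa\in I$ invert $a$, products $x_0y$ of two such elements centralize $a$, and those centralizing elements $c$ that satisfy $c^2=1$ give distinct elements $ca$ with $(ca)^2=a^2\neq 1$. The differences are cosmetic: you translate on the left rather than the right, and your two-case bookkeeping can be shortened, as the paper does, by noting that since $\Nset$ is finite, all but finitely many of the elements $c$ already lie in $I$.
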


\begin{proof}
Suppose that $\Nset$ is finite and non-empty, and set
\[
I=\{t\in G:t^2=1\}.
\]
Then $I$ is infinite. Choose $a\in\Nset$. Since the map $t\mapsto at$
is injective and $\Nset$ is finite, only finitely many $t\in I$
satisfy $at\in\Nset$. Hence
\[
T=\{t\in I:(at)^2=1\}
\]
is infinite.

For every $t\in T$, we have $tat=a^{-1}$. Fix $t_0\in T$ and set
$c_t=t_0t$. Then the elements $c_t$ are distinct and satisfy
\[
c_tac_t^{-1}=a,
\]
so they centralize $a$.

Since $\Nset$ is finite, infinitely many $c_t$ satisfy $c_t^2=1$.
For each such $c_t$,
\[
(ac_t)^2=a^2\neq1,
\]
so $ac_t\in\Nset$. These elements are distinct, giving infinitely many
elements of $\Nset$, a contradiction.
\end{proof}

\begin{theorem}\label{thm:infinite}
Every infinite group admits an inverse ambiguous function.
\end{theorem}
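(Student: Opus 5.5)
We plan to combine Proposition~\ref{prop:pairing} with Lemma~\ref{lem:noninv}. By Proposition~\ref{prop:pairing}, it suffices to show that $\Pset$ can be partitioned into two-element subsets. We split into cases according to whether $\Nset$ is empty.

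If $\Nset=\emptyset$, then every element of $G$ satisfies $x^2=1$. Hence $\Pset=\emptyset$, which is trivially partitioned by the empty family. Concretely, the identity map is inverse ambiguous here, since $\Inv=\id$; this is the construction in the proof of Proposition~\ref{prop:pairing}, which fixes every involution.

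If $\Nset\neq\emptyset$, Lemma~\ref{lem:noninv} shows that $\Nset$ is infinite. The map $x\mapsto\{x,x^{-1}\}$ from $\Nset$ onto $\Pset$ is at most two-to-one. Therefore $\Pset$ is infinite, with $|\Pset|=|\Nset|$ by cardinal arithmetic for infinite sets. Any infinite set $X$ admits a partition into two-element subsets. Indeed, $|X|=|X\times\{0,1\}|$ for infinite $X$, by the axiom of choice, since $\kappa\cdot 2=\kappa$ for infinite cardinals. A bijection $X\times\{0,1\}\to X$ then carries the pairs $\{(y,0),(y,1)\}$, for $y\in X$, to a partition of $X$ into pairs. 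Applying this to $X=\Pset$ and invoking Proposition~\ref{prop:pairing} gives an inverse ambiguous function.

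The only substantive obstacle is ruling out a finite, nonempty $\Nset$ of odd size, where a parity obstruction would appear. That case is exactly what Lemma~\ref{lem:noninv} excludes. The remaining steps are set-theoretic and rely on the axiom of choice, which we will note explicitly in the proof.
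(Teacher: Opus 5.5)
Your proposal is correct and matches the paper's proof. The case $\Nset=\varnothing$ is handled by the identity map; otherwise Lemma~\ref{lem:noninv} makes $\Pset$ infinite, its members can be paired, and Proposition~\ref{prop:pairing} applies, with your $\kappa\cdot 2=\kappa$ argument simply making explicit the step the paper states as ``pair the members of $\Pset$.'' One small slip in your closing remark: $\Nset$ is a disjoint union of sets $\{x,x^{-1}\}$ and so always has even size when finite, so the parity obstruction you have in mind is $|\Pset|$ being odd, not $|\Nset|$ being odd.
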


\begin{proof}
If $\Nset=\varnothing$, then inversion is the identity, so the identity
map is inverse ambiguous. Otherwise $\Nset$, and hence $\Pset$, is
infinite by Lemma~\ref{lem:noninv}. Pair the members of $\Pset$ and
apply Proposition~\ref{prop:pairing}.
\end{proof}

Combining this theorem with the finite criterion gives the following
complete existence criterion.

\begin{corollary}\label{cor:all-groups}
A group $G$ admits an inverse ambiguous function if and only if either
$G$ is infinite, or $G$ is finite and
\[
|\{x\in G:\ord(x)\geq3\}|\equiv0\pmod4.
\]
\end{corollary}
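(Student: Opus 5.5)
The corollary follows by splitting on whether $G$ is finite; both directions are already available from the two earlier results, so the plan is only to combine them.

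If $G$ is infinite, Theorem~\ref{thm:infinite} gives an inverse ambiguous function on $G$. Hence for infinite groups the statement holds trivially: both sides of the equivalence are true.

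If $G$ is finite, the infinite alternative on the right-hand side is false. The claim therefore reduces to: $G$ admits an inverse ambiguous function if and only if $|\{x\in G:\ord(x)\geq3\}|\equiv0\pmod4$. This is exactly the equivalence of (i) and (ii) in Theorem~\ref{thm:finite}, so no further argument is needed. As a sanity check, the set of elements of order at least three is precisely $\Nset$. It splits into the two-element orbits making up $\Pset$, so $|\Nset|=2|\Pset|$. The condition modulo $4$ is therefore the parity condition that makes the pairing in Proposition~\ref{prop:pairing} possible.

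Putting the two cases together gives the stated criterion. There is no real obstacle here; the only point needing care is that the disjunction is phrased so that the finite case carries the divisibility condition while the infinite case is unconditional.
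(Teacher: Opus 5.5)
Your proposal is correct and is the paper's own argument: combine Theorem~\ref{thm:infinite} for infinite groups with Schmitz's criterion, Theorem~\ref{thm:finite}, for finite groups. Your remark that $|\Nset|=2|\Pset|$ is the same observation the paper uses to pass between conditions (ii) and (iii) of Theorem~\ref{thm:finite}.
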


\subsection{Automorphisms}

Toborg's proof~\cite[Theorem~3.2]{Toborg} does not use finiteness and
therefore applies to arbitrary groups.

\begin{proposition}\label{prop:automorphism-rigidity}
Let $G$ be a group and let $\alpha\in\Aut(G)$ be inverse ambiguous.
Then $G$ is abelian and $\alpha^4=\id$. Moreover, exactly one of the
following holds:
\begin{enumerate}[label=\textup{(\roman*)}]
\item $\alpha$ has order $4$;
\item every element of $G$ has order dividing $2$ and
$\alpha^2=\id$.
\end{enumerate}
\end{proposition}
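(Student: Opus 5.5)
The plan is to exploit that $\alpha^2=\Inv$ as maps, while $\alpha^2$ is an automorphism because $\alpha$ is. First I would show that $G$ is abelian. Since $\alpha^2\in\Aut(G)$ and $\alpha^2(x)=x^{-1}$ for every $x$, inversion is a homomorphism. Thus for all $x,y\in G$,
\[
(xy)^{-1}=\alpha^2(xy)=\alpha^2(x)\alpha^2(y)=x^{-1}y^{-1}.
\]
Inverting both sides gives $xy=yx$, so $G$ is abelian. Next,
\[
\alpha^4=(\alpha^2)^2=\Inv^2=\id,
\]
so $\alpha^4=\id$ and the order of $\alpha$ divides $4$.

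For the dichotomy, I would split according to whether $\alpha^2=\id$. If $\alpha^2\neq\id$, then the order of $\alpha$ divides $4$ but not $2$, so $\alpha$ has order exactly $4$, which is case (i). If $\alpha^2=\id$, then $\Inv=\alpha^2=\id$. Hence $x^{-1}=x$, that is, $x^2=1$ for every $x\in G$, and every element has order dividing $2$, which is case (ii).

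For exclusivity, note that in case (ii) we have $\alpha^2=\id$, so the order of $\alpha$ divides $2$ and cannot be $4$. Conversely, if $\alpha$ has order $4$, then $\alpha^2\neq\id$, so (ii) fails. Hence exactly one of (i) and (ii) holds.

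I expect no real obstacle here. The only point needing care is the first step: one must apply the homomorphism property to $\alpha^2$ rather than trying to manipulate $\alpha$ itself. The observation that $\alpha^2=\Inv$ makes inversion an automorphism gives commutativity immediately, and it does not use finiteness, consistent with the remark preceding the statement.
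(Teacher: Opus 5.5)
Your proof is correct and follows the paper's argument. You use that $\alpha^2=\Inv$ is a homomorphism to get commutativity, then $\alpha^4=\Inv^2=\id$, and split the dichotomy on whether $\alpha^2=\id$, which is equivalent to $\Inv=\id$. Your explicit check that (i) and (ii) are mutually exclusive is a small addition that the paper leaves implicit.
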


\begin{proof}
By the square-root formulation, $\alpha^2(x)=x^{-1}$ for every
$x\in G$. Since $\alpha^2$ is a homomorphism, for all $x,y\in G$ we
have
\[
x^{-1}y^{-1}=\alpha^2(xy)=(xy)^{-1}=y^{-1}x^{-1}.
\]
Thus $x$ and $y$ commute, and hence $G$ is abelian. Also,
$\alpha^4=\Inv^2=\id$. If every element has order dividing $2$, then
$\Inv=\id$ and $\alpha^2=\id$. Otherwise
$\alpha^2=\Inv\neq\id$, so $\alpha$ has order $4$.
\end{proof}

For abelian groups, the automorphism equation has a module-theoretic
interpretation.

\begin{proposition}\label{prop:gaussian}
Let $A$ be an abelian group, written additively. The following are
equivalent.
\begin{enumerate}[label=\textup{(\roman*)}]
\item $A$ admits an inverse ambiguous automorphism;
\item there exists $J\in\Aut(A)$ such that $J^2=-\id$;
\item the underlying abelian group $A$ admits a $\Z[i]$-module
structure.
\end{enumerate}
\end{proposition}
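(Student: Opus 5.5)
I will prove the cycle of implications (i)$\Rightarrow$(ii)$\Rightarrow$(iii)$\Rightarrow$(i), working additively throughout. The only input needed from earlier is the square-root formulation: a bijection $f$ is inverse ambiguous exactly when $f^2=\Inv$. In additive notation $\Inv=-\id_A$.

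For (i)$\Rightarrow$(ii), I take an inverse ambiguous automorphism $\alpha$ and set $J=\alpha$. Then $J^2=\Inv=-\id$ by the square-root formulation. For (ii)$\Rightarrow$(i), any $J\in\Aut(A)$ with $J^2=-\id$ is a bijection satisfying $J^2=\Inv$, so it is inverse ambiguous. No abelianness argument is needed here, since $A$ is abelian by hypothesis.

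For (ii)$\Rightarrow$(iii), I use that $\Z[i]\cong\Z[X]/(X^2+1)$. An endomorphism $J$ of $A$ with $J^2+\id=0$ makes $A$ a $\Z[X]$-module with $X$ acting by $J$, and this action factors through the quotient by $(X^2+1)$. Concretely, $(a+bi)\cdot x=ax+bJ(x)$. The module axioms reduce to two facts: $J$ is additive, and $i\cdot(i\cdot x)=J^2x=-x$. This action extends the given $\Z$-action, so the underlying abelian group is $A$.

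For (iii)$\Rightarrow$(ii), given a $\Z[i]$-module structure on $A$ whose underlying group is $A$, I define $J(x)=i\cdot x$. Distributivity makes $J$ additive, and $J^2(x)=(i^2)\cdot x=-x$. So $J^2=-\id$, which forces $J$ to be bijective with inverse $-J$, hence $J\in\Aut(A)$. The one point to state carefully is that the $\Z[i]$-structure must restrict to the given $\Z$-module structure on $A$. This holds automatically, because the scalars $\Z\subset\Z[i]$ act through $1$. There is no real obstacle; the whole proof is a matter of unwinding definitions.
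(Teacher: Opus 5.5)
Your proof is correct and follows the paper's argument essentially verbatim: (i)$\Leftrightarrow$(ii) via the square-root formulation $f^2=\Inv=-\id_A$, the action $(m+ni)\cdot a=ma+nJ(a)$ for (ii)$\Rightarrow$(iii), and multiplication by $i$ for (iii)$\Rightarrow$(ii). Your extra remarks just spell out details the paper leaves implicit: factoring through $\Z[X]/(X^2+1)$, bijectivity of $J$ from $J^2=-\id$, and compatibility with the given $\Z$-action.
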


\begin{proof}
The equivalence of (i) and (ii) follows from the square-root
formulation above, written additively. If (ii) holds, define
\[
(m+ni)\cdot a=ma+nJ(a)
\]
for $m,n\in\Z$ and $a\in A$. The relation $J^2=-\id$ gives the
defining relation $i^2=-1$, so this is a $\Z[i]$-module structure.
Conversely, multiplication by $i$ on a $\Z[i]$-module is an
automorphism whose square is $-\id$.
\end{proof}

\section{Abelian Groups}
\subsection{Finitely generated abelian groups}
We now combine Proposition~\ref{prop:gaussian} with the structure
theorem for finitely generated abelian groups.
\begin{theorem}\label{thm:fga}
Let $A$ be a finitely generated abelian group, let $T=\Tor(A)$, and
write
\[
A\cong\Z^r\oplus T.
\]
Then $A$ admits an inverse ambiguous automorphism if and only if $r$ is
even and $T$ admits an inverse ambiguous automorphism.
\end{theorem}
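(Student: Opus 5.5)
By Proposition~\ref{prop:gaussian}, the statement is equivalent to the following: $A$ admits a $\Z[i]$-module structure if and only if $r$ is even and $T$ admits one. I will work throughout with an automorphism $J$ of $A$ satisfying $J^2=-\id$.

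For the forward direction, suppose $J\in\Aut(A)$ satisfies $J^2=-\id$. Every automorphism preserves the torsion subgroup, so $J(T)=T$. Hence $J|_T$ is an automorphism of $T$ with square $-\id_T$, and so $T$ admits an inverse ambiguous automorphism. The map $J$ also induces an automorphism $\bar J$ of $A/T\cong\Z^r$ with $\bar J^2=-\id$. Viewing $\bar J$ as a matrix in $GL_r(\Z)$, we get $\det(\bar J)^2=\det(-I)=(-1)^r$. Since $\det\bar J$ is real (indeed $\pm1$), $(-1)^r$ must be a square in $\mathbb R$, so $(-1)^r=1$ and $r$ is even. Equivalently, $\Z^r$ becomes a torsion-free finitely generated $\Z[i]$-module, hence free over the PID $\Z[i]$, so its $\Z$-rank $r$ equals $2\cdot\operatorname{rank}_{\Z[i]}$ and is even. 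The determinant argument is shorter, and I will use it.

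For the converse, suppose $r=2s$ and $J_T\in\Aut(T)$ satisfies $J_T^2=-\id_T$. Fix a decomposition $A=F\oplus T$ with $F\cong\Z^{2s}$. On $F$, use the block-diagonal matrix built from $s$ copies of
\[
\begin{pmatrix}0&-1\\1&0\end{pmatrix},
\]
which squares to $-I$. Then $J=J_F\oplus J_T$ is an automorphism of $A$ with $J^2=-\id$, and Proposition~\ref{prop:gaussian} finishes the proof.

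There is no real obstacle here. The only point needing care is that the torsion-free part is not canonically a subgroup of $A$, so in the forward direction I will pass to the quotient $A/T$ rather than try to restrict $J$ to a complement of $T$. The induced map on $A/T$ is again an automorphism with square $-\id$, because $J$ and $J^{-1}$ both preserve $T$.
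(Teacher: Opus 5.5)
Your proof is correct and follows essentially the same route as the paper. You restrict $J$ to the characteristic subgroup $T$ and pass to $A/T\cong\Z^r$, where $\det(M)^2=(-1)^r$ forces $r$ to be even; for the converse you take the direct sum of $2\times 2$ rotation blocks on $\Z^{2s}$ with $J_T$.
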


\begin{proof}
Suppose that $J\in\Aut(A)$ satisfies $J^2=-\id_A$. Since $T$ is
characteristic in $A$, the restriction $J|_T$ satisfies
$(J|_T)^2=-\id_T$. Moreover, $J$ induces an automorphism $\overline{J}$ of
$A/T\cong\Z^r$ satisfying
\[
\overline{J}^{\,2}=-\id_{A/T}.
\]

Let $M\in\operatorname{GL}_r(\Z)$ be the matrix of $\overline{J}$.
Then $M^2=-I_r$. If $r$ were odd, taking determinants would give
$\det(M)^2=\det(-I_r)=-1$, which is impossible in $\Z$. Hence $r$ is even.

Conversely, suppose that $r=2m$ and that
$J_T\in\Aut(T)$ satisfies
\[
J_T^2=-\id_T.
\]
Define $J_0\in\Aut(\Z^{2m})$ by
\[
J_0(x_1,x_2,\ldots,x_{2m-1},x_{2m})
=
(-x_2,x_1,\ldots,-x_{2m},x_{2m-1}).
\]
Then $J_0^2=-\id_{\Z^{2m}}$. Therefore
$J_0\oplus J_T$ is an automorphism of $\Z^{2m}\oplus T$
whose square is $-\id$.
Hence $A$ admits an inverse ambiguous automorphism.
\end{proof}

\begin{corollary}\label{cor:free}
The free abelian group $\Z^r$ admits an inverse ambiguous automorphism
if and only if $r$ is even.
\end{corollary}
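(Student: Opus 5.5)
This is a direct specialization of Theorem~\ref{thm:fga}. I will apply that theorem to $A=\Z^r$. Since $\Z^r$ is torsion-free, $T=\Tor(\Z^r)=0$, and the decomposition in the theorem reads $\Z^r\cong\Z^r\oplus 0$. The theorem then says that $\Z^r$ admits an inverse ambiguous automorphism if and only if $r$ is even and the trivial group $0$ admits one.

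The only remaining point is that the trivial group satisfies the torsion condition. On $0$ the identity automorphism $J$ trivially satisfies $J^2=-\id_0$, because $-\id_0=\id_0$. So, by Proposition~\ref{prop:gaussian}, the trivial group admits an inverse ambiguous automorphism, and the torsion condition holds automatically. This leaves exactly the condition that $r$ is even.

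As a sanity check, and to make the corollary independent of the theorem, I could also argue both directions directly. For necessity, a matrix $M\in\operatorname{GL}_r(\Z)$ with $M^2=-I_r$ satisfies $\det(M)^2=(-1)^r$, which forces $r$ to be even. For sufficiency, the block-rotation map $J_0$ from the proof of Theorem~\ref{thm:fga} satisfies $J_0^2=-\id$ when $r=2m$. I do not expect any step to be an obstacle; the only thing to be careful about is the degenerate case $r=0$, where $\Z^0=0$ and $0$ is even, which is consistent with the statement.
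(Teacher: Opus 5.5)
Your proposal is correct and matches the paper, which gives no separate proof and treats this as the special case $T=0$ of Theorem~\ref{thm:fga}; your check that the trivial group admits the identity as an inverse ambiguous automorphism (since $-\id_0=\id_0$) is exactly the implicit step. The direct argument you sketch, using $\det(M)^2=(-1)^r$ for necessity and the block-rotation map $J_0$ for sufficiency, simply reproduces the two halves of the proof of Theorem~\ref{thm:fga} in this case.
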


Toborg's classification of finite abelian groups gives an explicit
form of Theorem~\ref{thm:fga}.

\begin{corollary}\label{cor:explicit-fga}
Let $A$ be an infinite finitely generated abelian group. Then $A$
admits an inverse ambiguous automorphism if and only if
\[
A\cong\Z^{2m}\oplus T
\]
for some $m\geq1$, where the primary components of the finite abelian
group $T$ satisfy the following conditions.
\begin{enumerate}[label=\textup{(\roman*)}]
\item If $p\equiv1\pmod4$, then $T_p$ is arbitrary.
\item If $p\equiv3\pmod4$, then
\[
T_p\cong\bigoplus_{k\geq1}C_{p^k}^{\,2m_{p,k}}
\]
for non-negative integers $m_{p,k}$, all but finitely many of which are
zero.
\item The $2$-primary component has the form
\[
T_2\cong E\oplus
\bigoplus_{j=1}^s
\left(C_{2^{a_j}}\oplus C_{2^{b_j}}\right),
\]
where $s$ is a non-negative integer, $E$ is an elementary abelian
$2$-group, the integers $a_j,b_j$ are positive, and
$|a_j-b_j|\leq1$ for every $j$.
\end{enumerate}
\end{corollary}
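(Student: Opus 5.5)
The plan is to combine Theorem~\ref{thm:fga} with Toborg's classification of finite abelian groups admitting inverse ambiguous automorphisms. Since $A$ is infinite and finitely generated, $A\cong\Z^r\oplus T$ with $r\geq1$ and $T=\Tor(A)$ finite. By Theorem~\ref{thm:fga}, $A$ admits an inverse ambiguous automorphism if and only if $r$ is even, so $r=2m$ with $m\geq1$, and $T$ admits one. It therefore remains to show that a finite abelian group $T$ admits an inverse ambiguous automorphism exactly when its primary components satisfy (i)--(iii). This is Toborg's finite classification~\cite{Toborg}, and I would quote it. I will nevertheless sketch why it holds, so that the statement is checked against Proposition~\ref{prop:gaussian}.

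First I reduce to primary components. Each $T_p$ is characteristic in $T$, so a $J$ with $J^2=-\id$ restricts to each $T_p$. Conversely, a direct sum of such $J_p$ works for $T$. So $T$ admits one if and only if every $T_p$ does, that is, if and only if each $T_p$ is a $\Z[i]$-module.

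For odd $p$, a $\Z[i]$-module structure on $T_p$ is the same as a module over $\Z_p[i]$, the $p$-adic completion.
\begin{itemize}
\item If $p\equiv1\pmod4$, then $-1$ is a square mod $p$. By Hensel's lemma there is $u\in\Z_p$ with $u^2=-1$, so multiplication by $u$ is an automorphism with square $-\id$ on any $T_p$. This gives (i).
\item If $p\equiv3\pmod4$, then $\Z_p[i]$ is an unramified DVR with uniformizer $p$ and residue field $\mathbb F_{p^2}$. Its finite modules are sums of $\Z_p[i]/p^k\cong C_{p^k}^2$. Hence each homocyclic layer multiplicity is even, which is (ii).
\item For the converse in (ii), use the $2\times2$ block $\begin{pmatrix}0&-1\\1&0\end{pmatrix}$ on $C_{p^k}^2$.
\end{itemize}
The necessity in (ii) can also be seen via Ulm invariants: $p^{k-1}T_p[p]/p^kT_p[p]$ is an $\mathbb F_p[i]=\mathbb F_{p^2}$-vector space, so its $\mathbb F_p$-dimension is even.

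The prime $2$ is the main obstacle, because $\Z_2[i]$ is ramified: $2=-i(1+i)^2$, with uniformizer $\pi=1+i$. Finite $\Z_2[i]$-modules are sums of $\Z_2[i]/\pi^n$.
\begin{itemize}
\item For $n=2k$, the module $\Z_2[i]/\pi^{2k}=\Z_2[i]/2^k$ is $C_{2^k}^2$ as a group.
\item For $n=2k+1$, it is $C_{2^{k+1}}\oplus C_{2^k}$.
\end{itemize}
Thus the underlying groups are exactly the pairs $C_{2^a}\oplus C_{2^b}$ with $|a-b|\leq1$, allowing $b=0$, which gives a single $C_2$. Collecting the copies of $C_2$ coming from $n=1$ into $E$ yields the form in (iii).

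Conversely, each such summand carries the required structure. $E$ does, via the identity, since $-\id=\id$ there; indeed $\pi$ acts as $0$ on $\Z_2[i]/\pi\cong C_2$. The pairs with $|a-b|\leq1$ are the modules $\Z_2[i]/\pi^{a+b}$. The only delicate points are verifying the group structure of $\Z_2[i]/\pi^n$ and the structure theorem over the DVR $\Z_2[i]$. Both are standard, and they are in any case covered by citing Toborg.
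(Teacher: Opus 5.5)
Your argument is the paper's: Theorem~\ref{thm:fga} forces $r=2m$ with $m\geq1$ (since $A$ is infinite) and reduces the question to $T$, where conditions (i)--(iii) are exactly Toborg's finite classification. Your supplementary sketch via finite modules over the DVRs $\Z_p[i]$ is correct, including the identifications $\Z_2[i]/\pi^{2k}\cong C_{2^k}^2$ and $\Z_2[i]/\pi^{2k+1}\cong C_{2^{k+1}}\oplus C_{2^k}$, and it gives a self-contained proof of the classification that the paper simply quotes from~\cite{Toborg}.
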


\begin{proof}
By Theorem~\ref{thm:fga}, it remains to determine when $T$ admits an
inverse ambiguous automorphism. The stated conditions are exactly those in
Toborg's classification of the odd-primary and $2$-primary components;
see
\cite[Theorems~3.7, 3.11, 4.5, and~4.6]{Toborg}.
\end{proof}

\subsection{Divisible abelian groups}
We next classify the divisible abelian groups admitting inverse
ambiguous automorphisms.

\begin{theorem}\label{thm:divisible_abelian}
Let $G$ be a divisible abelian group, written additively, and write
\[
G\cong
\mathbb Q^{(\kappa_0)}
\oplus
\bigoplus_p C_{p^\infty}^{(\kappa_p)},
\]
where $p$ ranges over the primes, $C_{p^\infty}$ denotes the Pr\"ufer
$p$-group, and $\kappa_0,\kappa_p$ are cardinals. Then $G$ admits an
inverse ambiguous automorphism if and only if none of the cardinals
\[
\kappa_0,\qquad \kappa_2,\qquad
\kappa_p \quad (p\equiv3\pmod4)
\]
is a finite odd cardinal. There is no restriction on $\kappa_p$ for
primes $p\equiv1\pmod4$.
\end{theorem}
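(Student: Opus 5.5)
The plan is to use Proposition~\ref{prop:gaussian} and decompose $G$ into pieces that are characteristic, so that a $J$ with $J^2=-\id$ restricts to each piece. The torsion subgroup $\Tor(G)=\bigoplus_p C_{p^\infty}^{(\kappa_p)}$ is characteristic, and so is each $p$-primary component $G_p=C_{p^\infty}^{(\kappa_p)}$. The torsion-free part is not a subgroup of $G$, but it is the quotient $G/\Tor(G)\cong\mathbb Q^{(\kappa_0)}$, and $J$ induces $\overline J$ on it with $\overline J^{\,2}=-\id$. Conversely, by Proposition~\ref{prop:gaussian} it suffices to construct such a $J$ on each summand separately and take the direct sum. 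So the theorem reduces to three questions: $\mathbb Q^{(\kappa)}$, $C_{p^\infty}^{(\kappa)}$ for $p\equiv 1\pmod 4$, and $C_{p^\infty}^{(\kappa)}$ for $p=2$ or $p\equiv3\pmod4$.

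\emph{Constructions.} If $\kappa$ is even or infinite, then $\kappa=2\lambda$ as cardinals, so $H^{(\kappa)}\cong (H\oplus H)^{(\lambda)}$ for any group $H$. The map $(x,y)\mapsto(-y,x)$ on each $H\oplus H$ squares to $-\id$. This handles $\mathbb Q$ and every prime whenever $\kappa$ is not finite odd. For $p\equiv1\pmod4$ there is no parity restriction, because $-1$ is a square in $\Z_p$ (Hensel's lemma applied to $x^2+1$ modulo $p$). Since $\operatorname{End}(C_{p^\infty})\cong\Z_p$, a $p$-adic unit $u$ with $u^2=-1$ acts on $C_{p^\infty}$ as an automorphism squaring to $-\id$. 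Applying this coordinatewise gives the required $J$ on $C_{p^\infty}^{(\kappa)}$ for any $\kappa$.

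\emph{Obstructions.} Suppose $\kappa$ is a finite odd number $n$.
\begin{itemize}
\item For $\mathbb Q^n$, the induced $\overline J$ is a $\mathbb Q$-linear map with $\overline J^{\,2}=-I_n$. Taking determinants gives $\det(\overline J)^2=(-1)^n=-1$ in $\mathbb Q$, which is impossible.
\item For $p=2$ or $p\equiv3\pmod4$ with $\kappa_p=n$, restrict $J$ to $G_p\cong C_{p^\infty}^n$. Here $\operatorname{End}(C_{p^\infty}^n)\cong M_n(\Z_p)$, and $J$ corresponds to a matrix $M$ with $M^2=-I$. Then $\det(M)^2=-1$ in $\Z_p$, hence in the field $\mathbb Q_p$. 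This is impossible because $-1$ is not a square in $\mathbb Q_2$ (it is not a square modulo $8$) nor in $\mathbb Q_p$ for $p\equiv3\pmod4$ (it is not a square modulo $p$).
\item If one prefers to avoid $\Z_p$, there is an alternative for these primes. The socle $G_p[p]\cong\mathbb F_p^n$ is $J$-invariant, and there $J^2=-I$ forces $\det^2=-1$ in $\mathbb F_p$, which fails for $p\equiv3\pmod4$. For $p=2$ this reduction is useless, since $-1=1$ in $\mathbb F_2$, so the $2$-adic determinant argument is the one to use.
\end{itemize}

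The step I expect to need the most care is the identification $\operatorname{End}(C_{p^\infty}^n)\cong M_n(\Z_p)$ and the resulting determinant argument. I would justify it by writing $C_{p^\infty}^n$ as the direct limit of $(\Z/p^k)^n$; an endomorphism then restricts compatibly to each $(C_{p^k})^n=G_p[p^k]$. The argument can also be made purely finite. If $-1$ were a square modulo $p^k$ for all $k$, or modulo $8$ when $p=2$, we would get a contradiction. So take $k=1$ for odd $p$ and $k=3$ for $p=2$, and compute $\det(J|_{G_p[p^k]})^2=(-1)^n=-1$ in $\Z/p^k$.

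Finally I would note that cardinal parity is only meaningful for finite $\kappa$. Infinite $\kappa$ always satisfy $\kappa=2\kappa$, which is why only finite odd cardinals are excluded.
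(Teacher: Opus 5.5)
Your proposal is correct and follows essentially the same route as the paper. You reduce via $\Tor(G)$ and the quotient $G/\Tor(G)$, rule out finite odd ranks by determinants over $\mathbb Q$, over $\mathbb F_p$ (socle) and over a finite $2$-layer, and build $J$ by pairing summands, plus a $p$-adic square root of $-1$ for $p\equiv1\pmod4$; the paper does the latter with an explicit Hensel-lifted compatible sequence $u_{p,n}$, which is the same thing. The only difference is cosmetic: for $p=2$ the paper works in $G_2[4]\cong(\Z/4\Z)^n$, which already suffices because $-1$ is not a square modulo $4$, whereas you use $\Z_2$ or reduction modulo $8$.
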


\begin{proof}
By Proposition~\ref{prop:gaussian}, it is enough to determine when
there exists $J\in\Aut(G)$ such that
\[
J^2=-\id_G.
\]

Suppose first that such an automorphism $J$ exists, and let
\[
T=\Tor(G).
\]
Since $T$ is characteristic in $G$, the automorphism $J$ restricts to
an automorphism of $T$ and induces an automorphism $\overline J$ of
\[
G/T\cong\mathbb Q^{(\kappa_0)}
\]
satisfying
\[
\overline J^{\,2}=-\id_{G/T}.
\]

Suppose that $\kappa_0=n$ is finite. Every additive endomorphism of the
$\mathbb Q$-vector space $\mathbb Q^n$ is $\mathbb Q$-linear. Thus
$\overline J$ is represented by a matrix
$M\in\operatorname{GL}_n(\mathbb Q)$ satisfying
\[
M^2=-I_n.
\]
Taking determinants gives
\[
\det(M)^2=(-1)^n.
\]
If $n$ were odd, this would make $-1$ a square in $\mathbb Q$, which
is impossible. Hence $\kappa_0$ is even whenever it is finite.

For each prime $p$, let $G_p$ denote the $p$-primary component of $T$.
Then
\[
G_p\cong C_{p^\infty}^{(\kappa_p)},
\]
and $G_p$ is characteristic in $G$, so it is $J$-invariant.

Let $p\equiv3\pmod4$, and suppose that $\kappa_p=n$ is finite. The
subgroup
\[
G_p[p]=\{x\in G_p:px=0\}
\]
is $J$-invariant and is isomorphic to the $n$-dimensional vector space
$\mathbb F_p^n$. The restriction of $J$ to $G_p[p]$ is therefore
represented by a matrix $M\in\operatorname{GL}_n(\mathbb F_p)$
satisfying
\[
M^2=-I_n.
\]
If $n$ were odd, then
\[
\det(M)^2=-1
\]
in $\mathbb F_p$. This is impossible because $-1$ is not a quadratic
residue modulo $p$ when $p\equiv3\pmod4$. Thus $\kappa_p$ is even
whenever it is finite.

It remains to consider $p=2$. Suppose that $\kappa_2=n$ is finite. The
subgroup
\[
G_2[4]=\{x\in G_2:4x=0\}
\]
is $J$-invariant and is isomorphic to
\[
(\mathbb Z/4\mathbb Z)^n.
\]
Hence the restriction of $J$ is represented by some
\[
M\in\operatorname{GL}_n(\mathbb Z/4\mathbb Z)
\]
with $M^2=-I_n$. If $n$ were odd, taking determinants would give
\[
\det(M)^2=-1
\qquad\text{in }\mathbb Z/4\mathbb Z.
\]
However, every unit of $\mathbb Z/4\mathbb Z$ has square $1$. This
contradiction shows that $\kappa_2$ is even whenever it is finite.

Conversely, suppose that none of $\kappa_0$, $\kappa_2$, or
$\kappa_p$ for $p\equiv3\pmod4$ is a finite odd cardinal. If
$\kappa$ is an even finite cardinal or an infinite cardinal, then a set
of cardinality $\kappa$ can be partitioned into pairs. Consequently,
the direct summands can be paired in each of
\[
\mathbb Q^{(\kappa_0)},\qquad
C_{2^\infty}^{(\kappa_2)},\qquad
C_{p^\infty}^{(\kappa_p)}
\quad (p\equiv3\pmod4).
\]
On each pair $H\oplus H$, where $H=\mathbb Q$ or
$H=C_{p^\infty}$, define
\[
R_H(x,y)=(-y,x).
\]
Then
\[
R_H^2(x,y)=(-x,-y),
\]
so the direct sum of these maps has square $-\id$ on the corresponding
component.

It remains to consider primes $p\equiv1\pmod4$. Fix such a prime. We
construct a compatible sequence of integers $(u_{p,n})_{n\geq1}$
satisfying
\[
u_{p,n}^2\equiv-1\pmod{p^n}
\qquad\text{and}\qquad
u_{p,n+1}\equiv u_{p,n}\pmod{p^n}.
\]
Since $p\equiv1\pmod4$, there exists $u_{p,1}$ such that
\[
u_{p,1}^2\equiv-1\pmod p.
\]
Suppose that $u_{p,n}$ has been chosen and write
\[
u_{p,n}^2+1=cp^n.
\]
Since $p\nmid2u_{p,n}$, we can choose $t\in\mathbb Z$ such that
\[
c+2u_{p,n}t\equiv0\pmod p.
\]
Set
\[
u_{p,n+1}=u_{p,n}+tp^n.
\]
Then
\[
u_{p,n+1}^2+1
=
p^n\bigl(c+2u_{p,n}t+t^2p^n\bigr),
\]
which is divisible by $p^{n+1}$, and the required compatibility is
immediate.

For $x\in G_p=C_{p^\infty}^{(\kappa_p)}$, choose $n$ such that
$p^nx=0$, and define
\[
J_p(x)=u_{p,n}x.
\]
The compatibility of the sequence $(u_{p,n})_{n\geq1}$ shows that
this definition is independent of the choice of $n$. Given
$x,y\in G_p$, choose $n$ such that $p^nx=p^ny=0$. Then
\[
J_p(x+y)=u_{p,n}(x+y)=J_p(x)+J_p(y),
\]
so $J_p$ is an endomorphism of $G_p$. Moreover,
\[
J_p^2(x)=u_{p,n}^2x=-x.
\]
Thus $J_p^2=-\id_{G_p}$, and hence $J_p$ is an automorphism.

Taking the direct sum of the maps constructed on the rational
component and all the primary components gives an automorphism
$J\in\Aut(G)$ such that
\[
J^2=-\id_G.
\]
Therefore $J$ is an inverse ambiguous automorphism.
\end{proof}

\section{Local and Finite-Quotient Results}

\subsection{Proper subgroups of locally finite groups}

We use the following cyclic-group criterion of
Toborg~\cite[Lemma~3.6]{Toborg}.

\begin{lemma}\label{lem:cyclic}
Let $C$ be a non-trivial finite cyclic $p$-group. Then $C$ admits an
inverse ambiguous automorphism if and only if either
\[
p\equiv 1\pmod 4
\]
or
\[
C\cong C_2.
\]
\end{lemma}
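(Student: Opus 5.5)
Let $C=C_{p^n}$ with $n\geq1$, written additively. The plan is to use Proposition~\ref{prop:gaussian} together with the fact that $\Aut(C)\cong(\Z/p^n\Z)^\times$, where $u$ acts as multiplication by $u$. An automorphism $J=u\cdot$ satisfies $J^2=-\id$ exactly when $u^2\equiv-1\pmod{p^n}$. So the statement reduces to a number-theoretic claim: $-1$ is a square modulo $p^n$ if and only if $p\equiv1\pmod4$ or $p^n=2$.

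For sufficiency, consider the two cases separately.
\begin{itemize}
\item If $p\equiv1\pmod4$, the Hensel-type lifting in the proof of Theorem~\ref{thm:divisible_abelian} already gives an integer $u_{p,n}$ with $u_{p,n}^2\equiv-1\pmod{p^n}$. I will simply cite that construction.
\item If $C\cong C_2$, then $-\id=\id$, so the identity works. Here $u=1$.
\end{itemize}

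For necessity, suppose $u^2\equiv-1\pmod{p^n}$.
\begin{itemize}
\item If $p$ is odd, reducing modulo $p$ gives $u^2\equiv-1\pmod p$. Then $u$ has order $4$ in the cyclic group $\mathbb F_p^\times$, so $4\mid p-1$. This rules out $p\equiv3\pmod4$.
\item If $p=2$ and $n\geq2$, reducing modulo $4$ gives $u^2\equiv-1\equiv3\pmod4$. This is impossible, since every unit modulo $4$ squares to $1$, exactly as in the $p=2$ step of Theorem~\ref{thm:divisible_abelian}.
\end{itemize}

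There is no real obstacle here. The only step needing care is the identification of $\Aut(C_{p^n})$ with multiplication by units, which is standard because an endomorphism of a cyclic group is determined by the image of a generator.
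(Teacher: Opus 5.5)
Your proof is correct. It is not the paper's route only because the paper gives no proof of this lemma: it quotes it from Toborg~\cite[Lemma~3.6]{Toborg} and uses it as a black box.

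Your argument is self-contained. You use Proposition~\ref{prop:gaussian} and the fact that every endomorphism of $C_{p^n}$ is multiplication by an integer to reduce the lemma to the solvability of $u^2\equiv-1\pmod{p^n}$. You then reuse two steps already in the proof of Theorem~\ref{thm:divisible_abelian}:
\begin{enumerate}[label=\textup{(\roman*)}]
\item the Hensel lifting $(u_{p,n})$ for $p\equiv1\pmod4$;
\item the observation that every unit of $\Z/4\Z$ squares to $1$, which handles $p=2$, $n\geq2$. This is the rank-one case of the $\kappa_2$ argument there.
\end{enumerate}
Your order-$4$ argument in $\mathbb F_p^\times$ justifies the fact that $-1$ is a non-residue modulo $p\equiv3\pmod4$, which that proof states without justification. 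Lagrange's theorem suffices there; cyclicity of $\mathbb F_p^\times$ is not needed. There is no circularity, since Theorem~\ref{thm:divisible_abelian} comes first and does not use the lemma.

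The citation buys brevity. Your version removes the dependence on Toborg for this step and shows that the cyclic criterion is the same number-theoretic fact that drives the divisible classification. The one input still taken on trust, in your proof and in the paper's, is the existence of $u_{p,1}$ with $u_{p,1}^2\equiv-1\pmod p$ for $p\equiv1\pmod4$. Wilson's theorem supplies it, with $u_{p,1}=\bigl(\tfrac{p-1}{2}\bigr)!$, so a single line would make the argument fully self-contained.
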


The next lemma extends the part of Toborg's finite result corresponding
to primes congruent to $1$ modulo $4$ to arbitrary torsion abelian
groups; compare \cite[Theorem~3.7]{Toborg}.

\begin{lemma}\label{lem:torsion-primary}
Let $A$ be a torsion abelian group, and let $A_p$ denote its
$p$-primary component. Suppose that $A_2$ is elementary abelian and
that
\[
A_p=0
\]
for every prime $p\equiv 3\pmod 4$. Then there exists an automorphism
$J\in\Aut(A)$ such that
\[
J^2=-\id_A
\]
and
\[
J(H)=H
\]
for every subgroup $H\leq A$. In particular, $J$ is an inverse
ambiguous automorphism.
\end{lemma}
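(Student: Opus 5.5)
Since $A$ is torsion abelian, $A=\bigoplus_p A_p$, and every subgroup $H\le A$ decomposes as $H=\bigoplus_p H_p$ with $H_p=H\cap A_p$. It therefore suffices to construct, for each prime $p$ with $A_p\neq0$, an automorphism $J_p$ of $A_p$ with $J_p^2=-\id_{A_p}$ that leaves every subgroup of $A_p$ invariant, and then to set $J=\bigoplus_p J_p$. Under the hypotheses, only $p=2$ and primes $p\equiv1\pmod4$ can occur. The key idea is to take each $J_p$ to be a ``power map'', that is, multiplication by a $p$-adic integer. Such a map sends each element $x$ to an integer multiple of $x$, so $J(H)\subseteq H$ holds for every subgroup automatically.

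For $p=2$, the component $A_2$ is elementary abelian, so $-\id_{A_2}=\id_{A_2}$. We take $J_2=\id_{A_2}$. Then $J_2^2=-\id_{A_2}$, and $J_2$ fixes every subgroup.

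For $p\equiv1\pmod4$, we reuse the compatible sequence $(u_{p,n})_{n\ge1}$ built in the proof of Theorem~\ref{thm:divisible_abelian}. It satisfies $u_{p,n}^2\equiv-1\pmod{p^n}$ and $u_{p,n+1}\equiv u_{p,n}\pmod{p^n}$. For $x\in A_p$, choose $n$ with $p^nx=0$ and define $J_p(x)=u_{p,n}x$.

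That argument did not actually use divisibility; it used only that $A_p$ is a $p$-group. Hence it goes through verbatim:
\begin{itemize}
\item $J_p$ is well defined, by compatibility of the sequence;
\item $J_p$ is additive, by choosing a common exponent $n$ for $x$ and $y$;
\item $J_p^2=-\id_{A_p}$, because $u_{p,n}^2x=-x$.
\end{itemize}
In particular $J_p$ is bijective.

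It remains to show that $J_p(H)=H$ for every subgroup, not merely $J_p(H)\subseteq H$. Since $J_p(x)$ is an integer multiple of $x$, we have $J_p(H)\subseteq H$. Applying this twice gives
\[
H=-J_p^2(H)=J_p\bigl(-J_p(H)\bigr)\subseteq J_p(H),
\]
so $J_p(H)=H$. Alternatively, one can note directly that $J_p^{-1}=-J_p$ is also a power map.

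Assembling the pieces, $J=\bigoplus_p J_p$ is an automorphism of $A$ with $J^2=-\id_A$. For any subgroup $H$, we get $J(H)=\bigoplus_p J_p(H_p)=H$. The final clause of the lemma then follows from Proposition~\ref{prop:gaussian}.

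The only point needing care is the verification that $J_p$ is well defined and additive on an arbitrary, possibly unbounded, $p$-group. This is exactly the compatibility argument already given in the proof of Theorem~\ref{thm:divisible_abelian}, so there is no real obstacle.
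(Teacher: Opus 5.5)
Your proposal is correct and follows essentially the same route as the paper: the same $J_2=\id_{A_2}$, the same power maps $J_p(x)=u_{p,n}x$ built from the compatible sequence, and the same relation $J^{-1}=-J$ used to upgrade $J(H)\subseteq H$ to $J(H)=H$. The only difference is cosmetic. You check invariance componentwise via the standard splitting $H=\bigoplus_p(H\cap A_p)$, whereas the paper uses the Chinese remainder theorem to find a single integer $m$ with $J(x)=mx$; both rest on the same Bezout/CRT fact.
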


\begin{proof}
By the primary decomposition of torsion abelian groups,
\[
A=A_2\oplus\bigoplus_{p\equiv 1\pmod 4}A_p.
\]

Fix a prime $p\equiv1\pmod4$. As in the proof of
Theorem~\ref{thm:divisible_abelian}, choose integers
$(u_{p,n})_{n\geq1}$ such that
\begin{equation}\label{eq:compatible-roots}
u_{p,n}^2\equiv-1\pmod{p^n}
\quad\text{and}\quad
u_{p,n+1}\equiv u_{p,n}\pmod{p^n}.
\end{equation}

For $x\in A_p$, choose $n\geq 1$ such that $p^n x=0$, and define
\[
J_p(x)=u_{p,n}x.
\]
This definition is independent of the choice of $n$. Indeed, suppose
also that $p^m x=0$, and assume without loss of generality that
$m\geq n$. The compatibility congruences imply that
\[
u_{p,m}\equiv u_{p,n}\pmod{p^n}.
\]
Consequently,
\[
u_{p,m}x=u_{p,n}x.
\]

The map $J_p$ is an endomorphism of $A_p$. To see this, let
$x,y\in A_p$, and choose $n$ such that
\[
p^n x=p^n y=0.
\]
Then
\[
J_p(x+y)
 =u_{p,n}(x+y)
 =u_{p,n}x+u_{p,n}y
 =J_p(x)+J_p(y).
\]
Moreover,
\[
J_p^2(x)
 =u_{p,n}^2x
 =-x,
\]
because $p^n$ divides $u_{p,n}^2+1$. Hence
\[
J_p^2=-\id_{A_p}.
\]

On $A_2$, define
\[
J_2=\id_{A_2}.
\]
Since $A_2$ is elementary abelian, $-x=x$ for every $x\in A_2$.
Therefore
\[
J_2^2=-\id_{A_2}.
\]

Now define
\[
J
 =
J_2\oplus\bigoplus_{p\equiv 1\pmod 4}J_p.
\]
Then
\[
J^2=-\id_A,
\]
so $J$ is an automorphism with
\[
J^{-1}=-J.
\]

It remains to prove that every subgroup of $A$ is $J$-invariant.
Let $x\in A$. Since $x$ has finite order, it has only finitely many
non-zero primary components. Write
\[
x=x_2+x_{p_1}+\cdots+x_{p_r},
\]
where the $p_i$ are distinct primes congruent to $1$ modulo $4$.
Choose positive integers $e_i$ such that
\[
p_i^{e_i}x_{p_i}=0
\qquad (1\leq i\leq r).
\]
By the Chinese remainder theorem, there exists an integer $m$ such
that
\[
m\equiv 1\pmod 2
\]
and
\[
m\equiv u_{p_i,e_i}\pmod{p_i^{e_i}}
\qquad (1\leq i\leq r).
\]
Since $A_2$ has exponent at most $2$, it follows that
\[
mx_2=x_2.
\]
For each $i$, we also have
\[
mx_{p_i}=u_{p_i,e_i}x_{p_i}=J_{p_i}(x_{p_i}).
\]
Therefore
\[
J(x)=mx\in\langle x\rangle.
\]

Let $H\leq A$. The preceding observation gives
\[
J(H)\subseteq H.
\]
Since $J^{-1}(x)=-J(x)\in\langle x\rangle$ for every $x\in A$,
we also have
\[
J^{-1}(H)\subseteq H.
\]
Applying $J$ to the latter inclusion yields
\[
H\subseteq J(H).
\]
Thus
\[
J(H)=H.
\]
\end{proof}

\begin{theorem}\label{thm:proper-subgroups}
Let $G$ be an infinite locally finite group. If every proper subgroup
of $G$ admits an inverse ambiguous automorphism, then $G$ admits an
inverse ambiguous automorphism that leaves every subgroup of $G$
invariant. In particular, $G$ is abelian.
\end{theorem}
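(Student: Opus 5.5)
Our plan is to prove first that $G$ is abelian, then that $G$ satisfies the hypotheses of Lemma~\ref{lem:torsion-primary}, and then apply that lemma. For commutativity, take $x,y\in G$. Since $G$ is locally finite, $H=\langle x,y\rangle$ is finite, and hence proper because $G$ is infinite. By hypothesis $H$ admits an inverse ambiguous automorphism, so $H$ is abelian by Proposition~\ref{prop:automorphism-rigidity}. Thus $x$ and $y$ commute, and $G$ is abelian. Being locally finite, $G$ is a torsion abelian group, which we now write additively.

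Next we check the hypotheses of Lemma~\ref{lem:torsion-primary}, using cyclic subgroups together with Lemma~\ref{lem:cyclic}. Every cyclic subgroup $\langle x\rangle$ is finite, hence proper. Suppose $G_p\neq0$ for some $p\equiv3\pmod4$, and pick $x\in G_p$ of order $p$. Then $\langle x\rangle\cong C_p$ is a proper subgroup admitting an inverse ambiguous automorphism, which Lemma~\ref{lem:cyclic} forbids. Hence $G_p=0$ for such $p$. Similarly, if $G_2$ contained an element of order $4$, it would generate a proper subgroup $C_4$, which Lemma~\ref{lem:cyclic} again excludes (since $C_4\not\cong C_2$ and $2\not\equiv1\pmod4$). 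So $G_2$ is elementary abelian.

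Lemma~\ref{lem:torsion-primary} now yields $J\in\Aut(G)$ with $J^2=-\id_G$ and $J(H)=H$ for every subgroup $H\leq G$. By Proposition~\ref{prop:gaussian}, $J$ is inverse ambiguous, which proves the theorem.

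The only real subtlety is properness: we need each finitely generated subgroup, and in particular each cyclic one, to be proper. This is exactly where infiniteness and local finiteness are used, and no further obstacle is expected, since the transfinite gluing is already handled by the compatible roots $u_{p,n}$ inside Lemma~\ref{lem:torsion-primary}.
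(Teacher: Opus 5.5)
Your proposal is correct and follows the paper's proof essentially step for step. Commutativity comes from the finite proper subgroups $\langle x,y\rangle$ via Proposition~\ref{prop:automorphism-rigidity}; Lemma~\ref{lem:cyclic} applied to cyclic subgroups gives $G_p=0$ for $p\equiv3\pmod4$ and $G_2$ elementary abelian; and Lemma~\ref{lem:torsion-primary} finishes. Your restriction to elements of order exactly $p$ or $4$ is only a harmless variant of the paper's use of an arbitrary non-zero $x\in G_p$, since an element of order $2^k$ with $k\geq2$ has a multiple of order $4$.
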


\begin{proof}
Let $x,y\in G$. Since $G$ is locally finite, the subgroup
\[
\langle x,y\rangle
\]
is finite. Since $G$ is infinite, this subgroup is proper. By
hypothesis, it admits an inverse ambiguous automorphism, and hence it
is abelian by Proposition~\ref{prop:automorphism-rigidity}. Therefore
$x$ and $y$ commute. Since $x$ and $y$ were arbitrary, $G$ is
abelian.

We now write $G$ additively. Since $G$ is locally finite, it is a
torsion group. For each prime $p$, let $G_p$ denote the $p$-primary
component of $G$.

Let $0\neq x\in G_p$. The cyclic subgroup $\langle x\rangle$ is
finite, non-trivial, and proper in $G$. It therefore admits an inverse
ambiguous automorphism. By Lemma~\ref{lem:cyclic}, either
\[
p\equiv 1\pmod 4
\]
or
\[
\langle x\rangle\cong C_2.
\]
It follows that
\[
G_p=0
\]
for every prime $p\equiv 3\pmod 4$, and that every non-zero element
of $G_2$ has order $2$. Thus $G_2$ is elementary abelian.

The hypotheses of Lemma~\ref{lem:torsion-primary} are therefore
satisfied. Hence $G$ admits an inverse ambiguous automorphism that
leaves every subgroup of $G$ invariant.
\end{proof}

The following examples show that neither infiniteness nor local
finiteness can be omitted from Theorem~\ref{thm:proper-subgroups}.

\begin{example}\label{ex:finite-counterexample}
Let
\[
G=C_{41}\rtimes C_5
 =
\langle a,b\mid
a^{41}=b^5=1,\ b^{-1}ab=a^{10}
\rangle.
\]
Since
\[
10^5\equiv 1\pmod{41}
\quad\text{and}\quad
10\not\equiv 1\pmod{41},
\]
the residue class of $10$ has order $5$ in
$(\mathbb Z/41\mathbb Z)^\times$. Thus $G$ is a non-abelian group of
order $205$.

By Lagrange's theorem, every proper subgroup of $G$ has order $1$,
$5$, or $41$. Hence every non-trivial proper subgroup is cyclic of
prime order. Since
\[
5\equiv 41\equiv 1\pmod 4,
\]
Lemma~\ref{lem:cyclic} shows that every non-trivial proper subgroup
of $G$ admits an inverse ambiguous automorphism. The trivial subgroup
admits the identity automorphism.

On the other hand, $G$ itself admits no inverse ambiguous
automorphism, since it is non-abelian. Thus the infiniteness
assumption in Theorem~\ref{thm:proper-subgroups} cannot be omitted.
\end{example}

\begin{example}\label{ex:tarski-counterexample}
Let $p$ be a sufficiently large prime satisfying
\[
p\equiv 1\pmod 4,
\]
and let $G$ be a Tarski monster of exponent $p$ constructed by
Ol'shanskii~\cite{OlshanskiiTarski}. Every non-trivial proper subgroup
of $G$ is cyclic of order $p$, and hence admits an inverse ambiguous
automorphism by Lemma~\ref{lem:cyclic}. The trivial subgroup admits
the identity automorphism.

The group $G$ is non-abelian, so it admits no inverse ambiguous
automorphism. Moreover, $G$ is infinite and finitely generated, and
is therefore not locally finite. Thus the local finiteness assumption
in Theorem~\ref{thm:proper-subgroups} cannot be omitted.
\end{example}

\subsection{Finite quotients of residually finite groups}

\begin{proposition}\label{prop:residually-finite-quotients}
Let $G$ be a residually finite group. If every finite quotient of $G$
admits an inverse ambiguous automorphism, then $G$ is abelian.
\end{proposition}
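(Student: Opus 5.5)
The plan is to reduce to the fact that every finite quotient is abelian, then use residual finiteness to separate any non-trivial commutator from the identity.

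First, let $N$ be a normal subgroup of finite index in $G$. By hypothesis, the finite quotient $G/N$ admits an inverse ambiguous automorphism. By Proposition~\ref{prop:automorphism-rigidity}, whose argument uses no finiteness, $G/N$ is abelian. Hence the commutator subgroup $[G,G]$ is contained in $N$ for every such $N$.

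Second, take $x,y\in G$ and suppose that $[x,y]\neq1$. Since $G$ is residually finite, there is a normal subgroup $N$ of finite index with $[x,y]\notin N$. This contradicts the first step, so $[x,y]=1$. Since $x$ and $y$ were arbitrary, $G$ is abelian. Equivalently,
\[
[G,G]\subseteq\bigcap_{[G:N]<\infty,\ N\trianglelefteq G}N=1.
\]

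There is no real obstacle. The only point to state explicitly is that ``finite quotient'' means $G/N$ with $N$ normal of finite index, so that residual finiteness provides exactly the separating quotients used in the second step.
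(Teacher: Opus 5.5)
Your proposal is correct and follows the paper's proof. Each finite quotient $G/N$ is abelian by Proposition~\ref{prop:automorphism-rigidity}, so $[G,G]\leq N$ for every normal subgroup $N$ of finite index, and residual finiteness makes the intersection of all such $N$ trivial; your elementwise version with a single commutator $[x,y]$ is the same intersection argument in different words.
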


\begin{proof}
Let $N\unlhd G$ be a normal subgroup of finite index. By hypothesis,
$G/N$ admits an inverse ambiguous automorphism, and hence is abelian
by Proposition~\ref{prop:automorphism-rigidity}. Therefore
\[
[G,G]\leq N.
\]
This holds for every normal subgroup $N$ of finite index. Since $G$
is residually finite,
\[
\bigcap_{\substack{N\unlhd G\\ [G:N]<\infty}}N=1.
\]
Consequently,
\[
[G,G]=1,
\]
and $G$ is abelian.
\end{proof}

The conclusion of
Proposition~\ref{prop:residually-finite-quotients} cannot, in
general, be strengthened to the existence of an inverse ambiguous
automorphism of $G$.

\begin{theorem}\label{thm:finite-quotients-no-lifting}
There exists an infinite residually finite abelian group such that every
finite quotient admits an inverse ambiguous automorphism, whereas the
group itself admits none.
\end{theorem}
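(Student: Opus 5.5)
The plan is to take a torsion-free abelian group of rank one in which every prime other than those congruent to $1$ modulo $4$ has been inverted. Concretely, let
\[
A=\Z\bigl[\tfrac1q : q=2 \text{ or } q \text{ a prime with } q\equiv 3\pmod 4\bigr]\leq\mathbb Q .
\]
I will check three things in order: $A$ is residually finite, every finite quotient of $A$ admits an inverse ambiguous automorphism, and $A$ itself admits none.

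\emph{$A$ admits no inverse ambiguous automorphism.} Every endomorphism $\varphi$ of a subgroup $A$ of $\mathbb Q$ with $1\in A$ is multiplication by the rational number $r=\varphi(1)$. Indeed, for $a=s/t\in A$ we have $t\varphi(a)=s\varphi(1)$, and $\mathbb Q$ is torsion-free. If $J^2=-\id_A$, then $r^2=-1$ with $r\in\mathbb Q$, which is impossible. By Proposition~\ref{prop:gaussian}, $A$ admits no inverse ambiguous automorphism.

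\emph{Residual finiteness.} Let $0\neq a=s/t\in A$ and fix a prime $p\equiv1\pmod4$, say $p=5$. The element $t$ is a product of $\pm1$, powers of $2$, and primes $\equiv3\pmod4$, so $p\nmid t$. The $p$-adic valuation is therefore bounded on $a$, and hence $a\notin p^nA$ for large $n$. Each $p^nA$ has finite index $p^n$ in $A$, because $A/p^nA\cong\Z/p^n\Z$ as $A$ is $p$-local-like at $p$. Thus the intersection of the finite-index subgroups of $A$ is trivial.

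\emph{Finite quotients.} Let $B\leq A$ have finite index $m$. Then $mA\leq B$, so $A/B$ is a quotient of $A/mA$. Write $m=m_1m_2$, where $m_1$ is divisible only by primes $\equiv1\pmod4$ and $m_2$ only by the other primes. Since $A$ is $m_2$-divisible, $mA=m_1A$. Moreover, $A/m_1A\cong\Z/m_1\Z$, since $\Z\to A/m_1A$ is surjective (every denominator is invertible mod $m_1$) with kernel $m_1\Z$. Hence $A/B$ is a finite cyclic group whose order is divisible only by primes $\equiv1\pmod4$. Its $2$-part is trivial and it has no $p$-component for $p\equiv3\pmod4$, so Lemma~\ref{lem:torsion-primary} gives $J$ with $J^2=-\id$. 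Alternatively, one can invoke Lemma~\ref{lem:cyclic} on each primary component and take the direct sum. The trivial quotient admits the identity map.

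The main obstacle is the identification of the finite quotients: showing $mA=m_1A$ and $A/m_1A\cong\Z/m_1\Z$. This needs care with the divisibility of $A$ by the inverted primes, but it is an elementary computation with denominators.
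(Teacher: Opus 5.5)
Your proof is correct and follows the paper's approach: you use the same group (the paper's $S^{-1}\Z$) and the same obstruction, namely that every endomorphism is multiplication by some $r\in\mathbb Q$, which would force $r^2=-1$. The only differences are in the verifications, and both of yours work: you separate points with the single chain $5^nA$ instead of a prime $p\mid 4m^2+1$ chosen for each element, and you identify the finite quotients directly via $mA=m_1A$ and $A/m_1A\cong\Z/m_1\Z$ instead of the paper's argument that multiplication by elements of $S$ induces automorphisms of the quotient.
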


\begin{proof}
Let $S$ be the multiplicative subset of $\mathbb Z$ consisting of the
positive integers all of whose prime divisors are either equal to $2$
or congruent to $3$ modulo $4$. Regard
\[
A=S^{-1}\mathbb Z
\]
as an additive group.

We first prove that $A$ is residually finite. Let
\[
0\neq \frac{m}{s}\in A,
\]
where $m\in\mathbb Z$ and $s\in S$. Choose a prime divisor $p$ of
\[
4m^2+1.
\]
Then $p$ is odd and
\[
(2m)^2\equiv -1\pmod p.
\]
Thus the residue class of $2m$ has order $4$ in
$(\mathbb Z/p\mathbb Z)^\times$, and consequently
\[
p\equiv 1\pmod 4.
\]
In particular, $p$ is relatively prime to every element of $S$.
Reduction modulo $p$ therefore extends to a homomorphism
\[
\rho_p\colon A\longrightarrow\mathbb Z/p\mathbb Z,
\qquad
\rho_p\left(\frac{a}{t}\right)=at^{-1}\pmod p.
\]
Since $p\nmid m$, we have
\[
\rho_p\left(\frac{m}{s}\right)\neq 0.
\]
Hence $A$ is residually finite.

Let
\[
F=A/K
\]
be a finite quotient, and put
\[
c=1+K.
\]
For each $s\in S$, multiplication by $s$ induces an endomorphism
\[
\mu_s\colon F\longrightarrow F,
\qquad
\mu_s(x+K)=sx+K.
\]
Indeed, $sK\subseteq K$. Multiplication by $s$ is surjective on $A$,
because
\[
\frac{a}{t}
 =
s\frac{a}{st}.
\]
Therefore $\mu_s$ is surjective. Since $F$ is finite, $\mu_s$ is an
automorphism.

We claim that $F=\langle c\rangle$. Let
\[
\overline{x}=\frac{a}{s}+K\in F.
\]
Then
\[
\mu_s(\overline{x})=a+K=ac.
\]
The subgroup $\langle c\rangle$ is invariant under $\mu_s$. Since the
restriction of $\mu_s$ to the finite group $\langle c\rangle$ is
injective, it is surjective. Hence there exists
$y\in\langle c\rangle$ such that
\[
\mu_s(y)=ac.
\]
Since $\mu_s$ is injective on $F$, it follows that
\[
\overline{x}=y\in\langle c\rangle.
\]
Thus $F$ is cyclic.

If $F$ is trivial, it admits the identity automorphism. Suppose that
$F$ is non-trivial, and write
\[
F\cong C_n.
\]
We claim that no prime in $S$ divides $n$. Indeed, suppose that a
prime $q\in S$ divides $n$. Since $c$ generates $F$,
\[
z=\frac{n}{q}c
\]
is non-zero and satisfies
\[
qz=0.
\]
This contradicts the fact that multiplication by $q$ is an
automorphism of $F$.

Consequently, every prime divisor of $n$ is congruent to $1$ modulo
$4$. It follows from Lemma~\ref{lem:torsion-primary} that $F$ admits
an inverse ambiguous automorphism. Thus every finite quotient of $A$
admits an inverse ambiguous automorphism.

It remains to prove that $A$ itself admits no such automorphism. Let
\[
\varphi\colon A\longrightarrow A
\]
be an additive endomorphism, and set
\[
r=\varphi(1)\in A.
\]
For
\[
x=\frac{a}{s}\in A,
\]
we have
\[
s\varphi(x)
 =\varphi(sx)
 =\varphi(a)
 =ar.
\]
On the other hand,
\[
s(rx)=r(sx)=ar.
\]
Since $A$ is torsion-free, multiplication by $s$ is injective.
Therefore
\[
\varphi(x)=rx.
\]
Thus every endomorphism of the additive group $A$ is multiplication
by an element of $A\subseteq\mathbb Q$.

If $\varphi$ were an inverse ambiguous automorphism, then, in
additive notation,
\[
\varphi^2=-\id_A.
\]
Evaluating this identity at $1$ gives
\[
r^2=-1,
\]
which is impossible in $\mathbb Q$. Hence $A$ admits no inverse
ambiguous automorphism.
\end{proof}


\begin{thebibliography}{99}

\bibitem{ChengEtAl}
R.~Cheng, A.~Dasgupta, B.~R. Ebanks, L.~F. Kinch, L.~M. Larson,
and R.~B. McFadden,
``When does $f^{-1}=1/f$?'',
\emph{Amer. Math. Monthly} \textbf{105} (1998), no.~8, 704--717.
\doi{10.2307/2588987}

\bibitem{EulerForan}
R.~Euler and J.~Foran,
``On functions whose inverse is their reciprocal'',
\emph{Math. Mag.} \textbf{54} (1981), no.~4, 185--189.

\bibitem{Griffiths}
M.~Griffiths,
``$f(f(x))=-x$, windmills, and beyond'',
\emph{Math. Mag.} \textbf{83} (2010), no.~1, 15--23.

\bibitem{OlshanskiiTarski}
A.~Yu. Ol'shanskii,
``Groups of bounded period with subgroups of prime order'',
\emph{Algebra Logic} \textbf{21} (1982), no.~5, 369--418.
\doi{10.1007/BF02027230}

\bibitem{Schmitz}
D.~J. Schmitz,
``Inverse ambiguous functions on fields'',
\emph{Aequationes Math.} \textbf{91} (2017), no.~2, 373--389.
\doi{10.1007/s00010-016-0464-5}

\bibitem{SchmitzGallagher}
D.~J. Schmitz and K.~Gallagher,
``Inverse ambiguous functions on some finite non-abelian groups'',
\emph{Aequationes Math.} \textbf{92} (2018), no.~5, 963--973.
\doi{10.1007/s00010-018-0542-y}

\bibitem{schmitzrahmankindness2025continuous}
D.~J. Schmitz, S.~Rahman, and A.~Kindness,
``Continuous inverse ambiguous functions on Lie groups'',
\emph{Aequationes Math.} \textbf{99} (2025), 1357--1369.
\doi{10.1007/s00010-024-01131-8}

\bibitem{Toborg}
I.~Toborg,
``Inverse ambiguous functions and automorphisms on finite groups'',
\emph{Ann. Math. Siles.} \textbf{33} (2019), 284--297.
\doi{10.2478/amsil-2019-0006}

\end{thebibliography}
\end{document}